\documentclass[11pt]{article}%
\usepackage{amsmath}
\usepackage{amsfonts}
\usepackage{amssymb}
\usepackage{graphicx}%
\setcounter{MaxMatrixCols}{30}
%TCIDATA{OutputFilter=latex2.dll}
%TCIDATA{Version=5.00.0.2606}
%TCIDATA{CSTFile=40 LaTeX article.cst}
%TCIDATA{Created=Thursday, March 27, 2025 13:56:20}
%TCIDATA{LastRevised=Thursday, April 17, 2025 11:46:45}
%TCIDATA{<META NAME="GraphicsSave" CONTENT="32">}
%TCIDATA{<META NAME="SaveForMode" CONTENT="1">}
%TCIDATA{BibliographyScheme=Manual}
%TCIDATA{<META NAME="DocumentShell" CONTENT="Standard LaTeX\Blank - Standard LaTeX Article">}
%TCIDATA{Language=American English}
\newtheorem{theorem}{Theorem}

\newtheorem{definition}[theorem]{Definition}

\newtheorem{lemma}[theorem]{Lemma}

\newtheorem{proposition}[theorem]{Proposition}

\newenvironment{proof}[1][Proof]{\noindent\textbf{#1.} }{\ \rule{0.5em}{0.5em}}
\begin{document}

\title{Some spherical function values for two-row tableaux and Young subgroups with
three factors}
\author{Charles F. Dunkl\thanks{email: cfd5z@virginia.edu}\\Department of Mathematics\\University of Virginia\\Charlottesville, VA 22904-4137}
\maketitle

\begin{abstract}
A Young subgroup of the symmetric group $\mathcal{S}_{N}$ with three factors,
is realized as the stabilizer $G_{n}$ of a monomial $x^{\lambda}$ (
$=x_{1}^{\lambda_{1}}x_{2}^{\lambda_{2}}\cdots x_{N}^{\lambda_{N}}$) with
$\lambda=\left(  d_{1}^{n_{1}},d_{2}^{n_{2}},d_{3}^{n_{3}}\right)  $ (meaning
$d_{j}$ is repeated $n_{j}$ times, $1\leq j\leq3$), thus is isomorphic to the
direct product $\mathcal{S}_{n_{1}}\times\mathcal{S}_{n_{2}}\times
\mathcal{S}_{n_{3}}$. The orbit of $x^{\lambda}$ under the action of
$\mathcal{S}_{N}$ (by permutation of coordinates) spans a module $V_{\lambda}%
$, the representation induced from the identity representation of $G_{n}$. The
space $V_{\lambda}$ decomposes into a direct sum of irreducible $\mathcal{S}%
_{N}$-modules. The spherical function is defined for each of these, it is the
character of the module averaged over the group $G_{n}$. This paper concerns
the value of certain spherical functions evaluated at a cycle which has no
more than one entry in each of the three intervals $I_{j}=\left\{
i:\lambda_{i}=d_{j}\right\}  ,1\leq j\leq3$. These values appear in the study
of eigenvalues of the Heckman-Polychronakos operators in the paper by V. Gorin
and the author (arXiv:2412:01938v1). The present paper determines the
spherical function values for $\mathcal{S}_{N}$-modules $V$ of two-row tableau
type, corresponding to Young tableaux of shape $\left[  N-k,k\right]  $. The
method is based on analyzing the effect of a cycle on $G_{n}$-invariant
elements of $V$. These are constructed in terms of Hahn polynomials in two variables.

\end{abstract}

\section{Introduction}

Spherical functions arise when an irreducible representation of a group
contains the identity representation of a subgroup. This paper concerns the
symmetric group and subgroups of Young type. Such groups are defined as
stabilizer groups of particular monomials in the context of the symmetric
group acting on polynomials by permutation of variables. Specifically we study
the Young subgroup of $\mathcal{S}_{N}$ leaving each of three subintervals
$I_{1}=\left[  1,n_{1}\right]  $, $I_{2}=\left[  n_{1}+1,n_{1}+n_{2}\right]
$, $I_{3}=\left[  n_{1}+n_{2}+1,N\right]  $ setwise invariant , where
$N=n_{1}+n_{2}+n_{3}$ and $\mathcal{S}_{N}$ is the symmetric group of
permutations of $\left[  1,N\right]  =\left\{  1,2,\ldots,N\right\}  $. The
goal is to evaluate the spherical function for the isotype described by
two-row tableaux at cycles which have at most one entry in each of the
subintervals. This problem comes from a paper by Gorin and the author
\cite{DG} which analyzed the eigenvalues of certain difference-differential operator.

The basic technique is to specify a submodule of polynomials realizing the
isotype $\left[  N-k,k\right]  $ with $2k\leq N$, describe the polynomials
invariant under the Young subgroup, act on each of these by the cycle of
interest, and then project onto the space of invariants. The spherical
function is then computed from this data. In the present situation the
invariant polynomials are expressed with the aid of certain Hahn polynomials
in two variables.

We begin with a brief sketch of the background from \cite{DG}. The commutative
family of Heckman-Polychronakos operators is the set $\mathcal{P}_{k}%
:=\sum_{i=1}^{N}\left(  x_{i}\mathcal{D}_{i}\right)  ^{k}$ ($k\geq1$) in terms
of Dunkl operators $\mathcal{D}_{i}f\left(  x\right)  :=\frac{\partial
}{\partial x_{i}}f\left(  x\right)  +\kappa\sum_{j=1,j\neq i}^{N}%
\frac{f\left(  x\right)  -f\left(  x\left(  i,j\right)  \right)  }{x_{i}%
-x_{j}}$; $x\left(  i,j\right)  $ denotes $x$ with $x_{i}$ and $x_{j}$
interchanged, and $\kappa$ is a fixed parameter, often satisfying
$\kappa>-\frac{1}{N}$ (see Heckman \cite{He}, Polychronakos \cite{Po}). For
$\alpha\in\mathbb{Z}_{+}^{N}$, let $x^{\alpha}:=\prod\limits_{i=1}^{N}%
x_{i}^{\alpha_{i}}$. Suppose $\lambda_{1}\geq\lambda_{2}\geq\ldots\geq
\lambda_{N}\geq0$ then set $V_{\lambda}=\mathrm{span}_{\mathbb{F}}\left\{
x^{\beta}:\beta=w\lambda,w\in\mathcal{S}_{N}\right\}  $, that is, $\beta$
ranges over the permutations of $\lambda$, and $\mathbb{F}$ is an extension
field of $\mathbb{R}$ containing at least $\kappa$. The space $V_{\lambda}$ is
invariant under the action of $\mathcal{S}_{N}$. The eigenvalue analysis of
$\mathcal{P}_{k}$ is derived from the restriction of $\mathcal{P}%
_{k}V_{\lambda}$ to $V_{\lambda}$ (there is a triangularity based on the
dominance order of partitions). Let $\lambda=\left(  d_{1}^{n_{1}}%
,d_{2}^{n_{2}},d_{3}^{n_{3}}\right)  $ (that is, $d_{j}$ is repeated $n_{j}$
times, $1\leq j\leq3$), with $d_{1}>d_{2}>d_{3}\geq0$. Let $G_{\mathbf{n}}$
denote the stabilizer group of $x^{\lambda}$, so that $G_{\mathbf{n}}%
\cong\mathcal{S}_{n_{1}}\times\mathcal{S}_{n_{2}}\times\mathcal{S}_{n_{3}}$.
The representation of $\mathcal{S}_{N}$ realized on $V_{\lambda}$ is the
induced representation $\mathrm{ind}_{G_{\mathbf{n}}}^{\mathcal{S}_{N}}$. The
space $V_{\lambda}$ can be decomposed into a direct sum of $\mathcal{S}_{N}%
$-invariant subspaces of various isotypes, which may appear as several copies.
The number of copies (the multiplicity) of a particular isotype $\tau$ is
called a \emph{Kostka} number (see Macdonald \cite{MacD}). That is,
$V_{\lambda}=\sum_{\tau}\oplus V_{\lambda;\tau}.$ Because $\mathcal{P}_{k}$
commutes with the group action the restriction of $\mathcal{P}_{k}%
V_{\lambda;\tau}$ to $V_{\lambda}$ is contained in $V_{\lambda;\tau}$. If the
multiplicity of the isotype $\tau$ in $V_{\lambda}$ is greater than one then
the eigenvalues of $\mathcal{P}_{k}$ realized on $V_{\lambda;\tau}$ are
generally not rational in the parameters, but the sum of all the eigenvalues
(for any fixed $k$) can be explicitly found, in terms of the character of
$\tau$. In general this may not have a simple explicit form . A closed form
was found for hook isotypes, labeled by partitions of the form $\left[
N-b,1^{b}\right]  $ (by the author \cite{Dhk}, for the more general
$G_{\mathbf{n}}\cong\mathcal{S}_{n_{1}}\times\mathcal{S}_{n_{2\cdots}}%
\times\cdots\times\mathcal{S}_{n_{p}}$). The formula is based on considering
cycles corresponding to subsets $\mathcal{A=}\left\{  a_{1},\ldots,a_{\ell
}\right\}  $ of $\left\{  1,2,3\right\}  $, which are of length $\ell$ with
exactly one entry from each interval $I_{a_{j}}$. Any such cycle can be used
and the order of $a_{1},\cdots,a_{\ell}$ is immaterial. The degrees
$d_{1},d_{2},d_{3}$ enter the formula in a shifted way:%
\[
\widetilde{d}_{1}:=d_{1}+\kappa\left(  n_{2}+n_{3}\right)  ,\widetilde{d}%
_{2}:=d_{2}+\kappa n_{3},\widetilde{d}_{3}:=d_{3}.
\]
Let $h_{m}^{\mathcal{A}}:=h_{m}\left(  \widetilde{d}_{a_{1}},\widetilde
{d}_{a_{2}},\ldots,\widetilde{d}_{a_{\ell}}\right)  $, the complete symmetric
polynomial of degree $m$ (the generating function is $\sum_{k\geq0}%
h_{k}\left(  c_{1},c_{2},\ldots,c_{q}\right)  t^{k}=\prod_{i=1}^{q}\left(
1-c_{i}t\right)  ^{-1}$, see \cite[p.21]{MacD}). Denote the character of the
representation $\tau$ of $\mathcal{S}_{N}$ by $\chi^{\tau}\left(  w\right)  $
then the spherical function%
\[
\Phi^{\tau}\left(  g_{\mathcal{A}}\right)  :=\frac{1}{\#G_{\mathbf{n}}}%
\sum_{h\in G_{\mathbf{n}}}\chi^{\tau}\left(  g_{\mathcal{A}}~h\right)  ,
\]
where $g_{\mathcal{A}}$ is an $\ell$-cycle labeled by $\mathcal{A}$ as above,
and $\#G_{\mathbf{n}}=\prod_{i=1}^{3}n_{i}!$. In \cite{DG} the spherical
function $\Phi^{\tau}$ is denoted by $\chi^{\tau}\left[  \mathcal{A}%
;\mathbf{n}\right]  $, and called an "averaged character."

Now suppose the multiplicity of $\tau$ in $V_{\lambda}$ is $\mu$ then there
are $\mu\dim\tau$ eigenfunctions and eigenvalues of $\mathcal{P}_{k}$, and the
sum of all these eigenvalues is (\cite[Thm. 5.4]{DG})%
\[
\dim\tau\sum_{\ell=1}^{\min\left(  k+1,3\right)  }\left(  -\kappa\right)
^{\ell-1}\sum_{\mathcal{A}\subset\left\{  1,2,3\right\}  ,\#\mathcal{A}=\ell
}\Phi^{\tau}\left(  g_{\mathcal{A}}\right)  ~h_{k+1-\ell}^{\mathcal{A}}%
\prod_{i\in\mathcal{A}}n_{i}!.
\]
Here is an outline of the paper. Section \ref{SphF} reviews some general
results about spherical functions and the formula proven in \cite{Dhk} which
is the basic tool for the computations. The derivation starts with the
construction of an irreducible module of polynomials of isotype $\tau$ (for
practical reasons we choose such a module of minimum polynomial degree). The
invariant polynomials in $V$ are described in terms of elementary symmetric
polynomials. The dimension of the subspace of invariants is found in terms of
the parameters $n_{1},n_{2},n_{3}$ (by Frobenius reciprocity the dimension is
the same as $\mu$, the multiplicity of $\left[  N-k,k\right]  $ in
$V_{\lambda})$. In Section \ref{HahnP2} the two-variable Hahn polynomials are
defined and a basis for the invariants is constructed. Section \ref{twocyc}
determines the spherical functions for the $2$-cycles. Lastly Section
\ref{threecyc} produces the spherical function for $3$-cycles and also has a
discussion (Subsection \ref{mult1}) about some specific examples, especially
those with multiplicity one.

\section{\label{SphF}Spherical functions and invariants}

\begin{definition}
The action of the symmetric group $\mathcal{S}_{N}$ on polynomials $P\left(
x\right)  $ is given by $wP\left(  x\right)  =P\left(  xw\right)  $ and
$\left(  xw\right)  _{i}=x_{w\left(  i\right)  }$, $w\in\mathcal{S}_{N},1\leq
i\leq N$.
\end{definition}

Note $\left(  x\left(  vw\right)  \right)  _{i}=\left(  xv\right)  _{w\left(
i\right)  }=x_{v\left(  w\left(  i\right)  \right)  }=x_{vw\left(  i\right)
}$, $vwP\left(  x\right)  =\left(  wP\right)  \left(  xv\right)  =P\left(
xvw\right)  $. The projection onto $G_{\mathbf{n}}$-invariant polynomials is
given by%
\[
\rho P\left(  x\right)  :=\dfrac{1}{\#G_{\mathbf{n}}}%
%TCIMACRO{\dsum \limits_{h\in G_{\mathbf{n}}}}%
%BeginExpansion
{\displaystyle\sum\limits_{h\in G_{\mathbf{n}}}}
%EndExpansion
P\left(  xh\right)  .
\]
Let $\lambda=\left(  d_{1}^{n_{1}},d_{2}^{n_{2}},d_{3}^{n_{3}}\right)  $ (that
is, $d_{j}$ is repeated $n_{j}$ times, $1\leq j\leq3$), with $d_{1}%
>d_{2}>d_{3}\geq0$. Let $G_{\mathbf{n}}$ denote the stabilizer group of
$x^{\lambda}$, so that $G_{\mathbf{n}}\cong\mathcal{S}_{n_{1}}\times
\mathcal{S}_{n_{2}}\times\mathcal{S}_{n_{3}}$. Suppose that $M_{\tau}$ is an
$\mathcal{S}_{N}$-module of isotype $\tau$ and that $\left\{  \psi_{j}:1\leq
j\leq\mu\right\}  $ is a basis for the $G_{\mathbf{n}}$-invariants.

\begin{proposition}
\label{Bsum}\cite[Cor. 2]{Dhk} Suppose $g\in\mathcal{S}_{N}$ and $\rho
g\xi_{i}=\sum_{j=1}^{\mu}B_{ji}\left(  g\right)  \xi_{j}$ ($1\leq i,j\leq\mu$)
then $\Phi^{\tau}\left(  g\right)  =\mathrm{tr}\left(  B\left(  g\right)
\right)  $.
\end{proposition}

The key fact is that $\rho g\xi_{i}$ is itself an invariant and thus has a
unique expansion in the basis $\left\{  \psi_{j}:1\leq j\leq\mu\right\}  $.
The approach used in the sequel is to determine the action of $\rho g$ on each
basis element for the cycles described above.

For $1\leq k\leq\frac{N}{2}$ let $E\subset\left\{  1,2,\ldots,N\right\}  $
with $\#E=k$ and let $m_{E}:=\prod\limits_{i\in E}x_{i}$ and $V_{k}:=\left\{
p=\sum\limits_{\#E=k}c_{E}m_{E}:\sum\limits_{j=1}^{N}\frac{\partial}{\partial
x_{i}}p=0\right\}  $. Then $V_{k}$ is of isotype $\left[  N-k,k\right]  $ (an
irreducible $\mathcal{S}_{N}$-module of degree $\binom{N}{k}-\binom{N}{k-1}$ ).

To clearly display the action of $G_{\mathbf{n}}$ we introduce a modified
coordinate system. Replace
\[
\left(  x_{1},x_{2},\ldots,x_{N}\right)  \symbol{126}\left(  x_{1}^{\left(
1\right)  },\ldots,x_{n_{1}}^{\left(  1\right)  },x_{1}^{\left(  2\right)
},\ldots,x_{n_{2}}^{\left(  2\right)  },x_{1}^{\left(  3\right)  }%
,\ldots,x_{n_{p}}^{\left(  3\right)  }\right)  ,
\]
that is, $x_{i}^{\left(  j\right)  }$ stands for $x_{s}$ with $s=\sum
_{i=1}^{j-1}n_{i}+i$. We use $x_{\ast}^{\left(  i\right)  },x_{>}^{\left(
i\right)  }$ to denote a generic $x_{j}^{\left(  i\right)  }$ with $1\leq
j\leq n_{i}$, respectively $2\leq j\leq n_{i}$. In the sequel $g_{\ell}$
denotes the cycle $\left(  x_{1}^{\left(  1\right)  },x_{1}^{\left(  2\right)
},\ldots,x_{1}^{\left(  \ell\right)  }\right)  $ (with $2\leq\ell\leq3$). Let
$e_{i}\left(  x_{\ast}^{\left(  j\right)  }\right)  ,e_{i}\left(
x_{>}^{\left(  j\right)  }\right)  $ be defined by $\prod\limits_{i=1}^{n_{j}%
}\left(  1+tx_{i}^{\left(  j\right)  }\right)  =\sum\limits_{i=0}^{n_{i}}%
t^{i}e_{i}\left(  x_{\ast}^{\left(  j\right)  }\right)  $, respectively
$\prod\limits_{i=2}^{n_{j}}\left(  1+tx_{i}^{\left(  j\right)  }\right)
=\sum\limits_{i=0}^{n_{i}-1}t^{i}e_{i}\left(  x_{>}^{\left(  j\right)
}\right)  $ (elementary symmetric functions).

\begin{lemma}
\label{symme}$\rho\left(  x_{1}^{\left(  j\right)  }e_{i-1}\left(
x_{>}^{\left(  j\right)  }\right)  \right)  =\dfrac{i}{n_{j}}e_{i}\left(
x_{\ast}^{\left(  j\right)  }\right)  $ and $\rho\left(  e_{i}\left(
x_{>}^{\left(  j\right)  }\right)  \right)  =\dfrac{n_{j}-i}{n_{j}}%
e_{i}\left(  x_{\ast}^{\left(  j\right)  }\right)  $.
\end{lemma}

\begin{proof}
Let $p=x_{s_{1}}^{\left(  j\right)  }x_{s_{2}}^{\left(  j\right)  }\cdots
x_{s_{i}}^{\left(  j\right)  }$ (with $s_{1}<\ldots<s_{i}$) then $\rho
p=\binom{n_{j}}{i}^{-1}e_{i}\left(  x_{\ast}^{\left(  j\right)  }\right)  $,
because $e_{i}\left(  x_{\ast}^{\left(  j\right)  }\right)  $ is the sum of
$\binom{n_{j}}{i}$ monomials. There are $\binom{n_{j}-1}{i-1}$ monomials in
$x_{1}^{\left(  j\right)  }e_{i-1}\left(  x_{>}^{\left(  j\right)  }\right)  $
and $\binom{n_{j}-1}{i-1}/\binom{n_{j}}{i}=\dfrac{i}{n_{j}}$. There are
$\binom{n_{j}-1}{i}$ monomials in $e_{i}\left(  x_{>}^{\left(  j\right)
}\right)  $ and $\binom{n_{j}-1}{i}/\binom{n_{j}}{i}=\dfrac{n_{j}-i}{n_{j}}$.
\end{proof}

\begin{proposition}
\label{diffceqn}\cite[Prop. 2.1]{DPJ} A polynomial $f\left(  u,v\right)  $
satisfies%
\[
\left(  u-n_{1}\right)  f\left(  u+1,v\right)  +\left(  v-n_{2}\right)
f\left(  u,v+1\right)  -\left(  n_{3}-k+1+u+v\right)  f\left(  u,v\right)  =0
\]
for $0\leq u\leq n_{1},~0\leq v\leq n_{2},~k-n_{3}\leq u+v\leq k$ , if and
only if%
\[
\sum_{i=1}^{N}\frac{\partial}{\partial x_{i}}\sum_{u,v,u+v\leq k}f\left(
u,v\right)  e_{u}\left(  x_{\ast}^{\left(  1\right)  }\right)  e_{v}\left(
x_{\ast}^{\left(  2\right)  }\right)  e_{k-u-v}\left(  x_{\ast}^{\left(
3\right)  }\right)  =0,
\]
that is, the inner sum is an element of $V_{k}$.
\end{proposition}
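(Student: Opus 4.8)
The plan is to apply the operator $D:=\sum_{i=1}^{N}\partial/\partial x_{i}$ directly to the inner sum $p:=\sum_{u+v\le k}f(u,v)\,e_{u}(x_{\ast}^{(1)})\,e_{v}(x_{\ast}^{(2)})\,e_{k-u-v}(x_{\ast}^{(3)})$ and to read off the condition $Dp=0$ coefficient by coefficient. The decisive structural point is that the $N$ variables split into the three disjoint blocks $I_{1},I_{2},I_{3}$, so that $D=D_{1}+D_{2}+D_{3}$ where $D_{j}:=\sum_{i=1}^{n_{j}}\partial/\partial x_{i}^{(j)}$, and each $D_{j}$ acts only on the factor $e_{\bullet}(x_{\ast}^{(j)})$ drawn from its own block while treating the other two factors as constants. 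Hence the entire computation reduces to understanding the effect of $D_{j}$ on a single elementary symmetric polynomial in its block.

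First I would establish the identity $D_{j}\,e_{i}(x_{\ast}^{(j)})=(n_{j}-i+1)\,e_{i-1}(x_{\ast}^{(j)})$. This is immediate from $\partial e_{i}/\partial x_{r}^{(j)}=e_{i-1}$ taken in the variables of block $j$ with $x_{r}^{(j)}$ deleted, together with the count that a fixed squarefree degree-$(i-1)$ monomial survives the deletion of precisely the $n_{j}-(i-1)$ variables it does not contain; summing over the block therefore reproduces $e_{i-1}(x_{\ast}^{(j)})$ with multiplicity $n_{j}-i+1$. Substituting this into $Dp$ and using the product rule across the three blocks turns $Dp$ into a homogeneous polynomial of degree $k-1$, expressed as a linear combination of the products $e_{a}(x_{\ast}^{(1)})\,e_{b}(x_{\ast}^{(2)})\,e_{c}(x_{\ast}^{(3)})$ with $a+b+c=k-1$.

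Next I would collect, for each admissible triple $(a,b,c)$ with $a+b+c=k-1$, the coefficient of $e_{a}(x_{\ast}^{(1)})\,e_{b}(x_{\ast}^{(2)})\,e_{c}(x_{\ast}^{(3)})$ in $Dp$. The $D_{1}$ part contributes the term of $p$ with $(u,v)=(a+1,b)$, the $D_{2}$ part the term with $(u,v)=(a,b+1)$, and the $D_{3}$ part the term with $(u,v)=(a,b)$; adding these gives the coefficient $(n_{1}-a)f(a+1,b)+(n_{2}-b)f(a,b+1)+(n_{3}-k+a+b+1)f(a,b)$, which is exactly the negative of the left-hand side of the stated difference equation at $(u,v)=(a,b)$. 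Since the three blocks of variables are disjoint, the polynomials $e_{a}(x_{\ast}^{(1)})\,e_{b}(x_{\ast}^{(2)})\,e_{c}(x_{\ast}^{(3)})$ are linearly independent, so $Dp=0$ if and only if each of these coefficients vanishes, i.e. if and only if the difference equation holds at every corresponding $(u,v)$. This is the equivalence to be proved.

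The one place that demands care is matching the index set of the surviving coefficients with the range quoted in the statement and checking the boundary instances: when $a=n_{1}$, $b=n_{2}$, or $c$ reaches $0$ or $n_{3}$, one of the factors $(n_{1}-a)$, $(n_{2}-b)$, $(n_{3}-k+a+b+1)$ vanishes or the associated elementary symmetric polynomial is identically zero, so those equations are either automatically satisfied or simply absent. Verifying that the constraints which genuinely remain are precisely those indexed as in the proposition—rather than one fewer or one more at each edge of the triangular region—is the step I expect to require the most bookkeeping; the rest is the routine substitution of the derivative identity and re-indexing of the three sums.
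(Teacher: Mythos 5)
The paper contains no proof of this proposition to compare against --- it is quoted from \cite[Prop.~2.1]{DPJ} --- so your direct verification is the natural route, and its main ingredients are sound: the identity $\sum_{r=1}^{n_j}\partial e_{i}(x_{\ast}^{(j)})/\partial x_{r}^{(j)}=(n_{j}-i+1)\,e_{i-1}(x_{\ast}^{(j)})$ is correct, the disjointness of the three blocks does make the products $e_{a}(x_{\ast}^{(1)})e_{b}(x_{\ast}^{(2)})e_{c}(x_{\ast}^{(3)})$ linearly independent, and your extracted coefficient $(n_{1}-a)f(a+1,b)+(n_{2}-b)f(a,b+1)+(n_{3}-k+1+a+b)f(a,b)$ at a triple $(a,b,c)$ with $a+b+c=k-1$ is exactly right, including the self-consistent vanishing of $(n_{1}-a)$ at $a=n_{1}$, etc.

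However, the step you explicitly defer as bookkeeping is precisely where the argument fails to close, and it is not routine: the nonzero products are indexed by $0\leq a\leq n_{1}$, $0\leq b\leq n_{2}$, $0\leq c=k-1-a-b\leq n_{3}$, so your computation shows that $Dp=0$ is equivalent to the difference equation holding for $\max(0,\,k-1-n_{3})\leq u+v\leq k-1$, a range shifted by one from the quoted $k-n_{3}\leq u+v\leq k$, and the two condition sets are genuinely different. At $u+v=k$ the quoted equation involves $f(u+1,v)$ and $f(u,v+1)$ at level $k+1$, values that never enter $p$, so $Dp=0$ cannot imply it: take $n_{1}=n_{2}=n_{3}=1$, $k=1$, $f(u,v)=u-v$; then $p=x_{1}-x_{2}\in V_{1}$, yet at $(u,v)=(1,0)$ the quoted equation reads $-f(1,1)-2f(1,0)=-2\neq0$. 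Conversely, when $k>n_{3}$ your computation yields genuine constraints at level $u+v=k-1-n_{3}$, where the coefficient $n_{3}-k+1+u+v$ vanishes and the equation degenerates to $(n_{1}-u)f(u+1,v)+(n_{2}-v)f(u,v+1)=0$; these lie below the quoted range and are omitted by it. So your method proves the correct equivalence but with the shifted index set; the range as transcribed in the statement (which coincides with the support condition $e_{k-u-v}(x_{\ast}^{(3)})\neq0$ for the coefficients of $p$, not with the index set of the difference equation) appears to be off by one relative to what the computation forces, presumably a transcription or convention discrepancy with \cite{DPJ}. You should carry out the boundary analysis, state the corrected range, and note the discrepancy, rather than assume the index sets will match.
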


The formula describes the space of $G_{\mathbf{n}}$-invariant polynomials of
isotype $\left[  N-k,k\right]  $.

\section{\label{HahnP2}Hahn polynomials in two variables}

One convenient orthogonal basis for $V_{k}$ is defined in terms of Hahn
polynomials (see \cite{DPJ} ) (the Pochhammer symbol is $\left(
\alpha\right)  _{j}=\prod_{i=1}^{j}\left(  \alpha+i-1\right)  $)%
\[
E_{m}\left(  \alpha,\beta,\gamma,t\right)  :=\sum_{i=0}^{m}\left(  -1\right)
^{i}\dbinom{m}{i}\left(  \beta-m+1\right)  _{i}\left(  \alpha-m+1\right)
_{m-i}\left(  -t\right)  _{i}\left(  t-\gamma\right)  _{m-i}%
\]
then the basis element $\psi_{m}$ is given by (in two parts for later
convenience)%
\begin{align}
\widetilde{\psi}_{m}^{\left(  1\right)  }\left(  t\right)   &  :=E_{k-m}%
\left(  n_{3},n_{1}+n_{2}-2m,k-m,k-t\right) \label{psim1}\\
&  =\sum_{j=0}^{k-m}\frac{\left(  m-k\right)  _{j}}{j!}\left(  n_{1}%
+n_{2}-k-m+1\right)  _{j}\left(  n_{3}-k+m+1\right)  _{k-m-j}\nonumber\\
&  \times\left(  t-k\right)  _{j}\left(  m-t\right)  _{k-m-j}\nonumber
\end{align}%
\begin{align}
\widetilde{\psi}_{m}^{\left(  2\right)  }\left(  u,v\right)   &
:=E_{m}\left(  n_{2},n_{1},u+v,v\right) \label{psim2}\\
&  =\sum_{i=0}^{m}\frac{\left(  -m\right)  _{i}}{i!}\left(  n_{1}-m+1\right)
_{i}\left(  n_{2}-m+1\right)  _{m-i}\left(  -v\right)  _{i}\left(  -u\right)
_{m-i}\nonumber\\
\widetilde{\psi}_{m}\left(  u,v\right)   &  :=\widetilde{\psi}_{m}^{\left(
1\right)  }\left(  u+v\right)  \widetilde{\psi}_{m}^{\left(  2\right)
}\left(  u,v\right) \nonumber
\end{align}
for $0\vee\left(  k-n_{3}\right)  \leq m\leq n_{1}\wedge n_{2}\wedge
k\wedge\left(  n_{1}+n_{2}-k\right)  $ (the number of these +1 is the
multiplicity of $1_{G_{\mathbf{n}}}$ in $\left[  N-k,k\right]  $).

\begin{proposition}
A basis for the $G_{\mathbf{n}}$-invariants is given by
\[
\psi_{m}\left(  x\right)  :=\sum_{u,v,u+v\leq k}\widetilde{\psi}_{m}\left(
u,v\right)  e_{u}\left(  x_{\ast}^{\left(  1\right)  }\right)  e_{v}\left(
x_{\ast}^{\left(  2\right)  }\right)  e_{k-u-v}\left(  x_{\ast}^{\left(
3\right)  }\right)  .
\]

\end{proposition}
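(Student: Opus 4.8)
The plan is to reduce the statement to three checks: that each $\psi_m$ is $G_{\mathbf{n}}$-invariant, that each lies in $V_k$, and that the $\psi_m$ are linearly independent and $\mu$ in number. Invariance is immediate, since $\psi_m$ is a linear combination of products $e_u(x_\ast^{(1)})e_v(x_\ast^{(2)})e_{k-u-v}(x_\ast^{(3)})$ and each factor $e_\cdot(x_\ast^{(j)})$ is fixed by the factor $\mathcal{S}_{n_j}$ of $G_{\mathbf{n}}$. Membership in $V_k$ is, by Proposition \ref{diffceqn}, precisely the assertion that the coefficient array $\widetilde{\psi}_m(u,v)=\widetilde{\psi}_m^{(1)}(u+v)\,\widetilde{\psi}_m^{(2)}(u,v)$ solves the displayed difference equation on the grid $0\le u\le n_1$, $0\le v\le n_2$, $k-n_3\le u+v\le k$. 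Since the monomials $e_u e_v e_{k-u-v}$ attached to distinct admissible $(u,v)$ are themselves linearly independent, the assignment $\widetilde{\psi}_m\mapsto\psi_m$ is injective, so independence of the $\psi_m$ is equivalent to independence of the arrays $\widetilde{\psi}_m$.

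For membership I would argue by separation of variables. Writing $s=u+v$ and $f=\phi(s)H(u,v)$ with $\phi=\widetilde{\psi}_m^{(1)}$ and $H=\widetilde{\psi}_m^{(2)}$, the two shifted terms $f(u+1,v)$ and $f(u,v+1)$ both sit at level $s+1$, so the left side of the difference equation factors as
\[
\phi(s+1)\big[(u-n_1)H(u+1,v)+(v-n_2)H(u,v+1)\big]-(n_3-k+1+s)\,\phi(s)H(u,v).
\]
The crux is the bivariate identity
\[
(u-n_1)\widetilde{\psi}_m^{(2)}(u+1,v)+(v-n_2)\widetilde{\psi}_m^{(2)}(u,v+1)=(u+v-n_1-n_2+m)\,\widetilde{\psi}_m^{(2)}(u,v),
\]
valid as a polynomial identity in $(u,v)$, which I would establish from the explicit sum \eqref{psim2} using the Pochhammer shift relations $(-u-1)_j=(-u-1)(-u)_{j-1}$ and $(-v-1)_j=(-v-1)(-v)_{j-1}$ and collecting terms by degree; the cases $m=0$ (where $H\equiv1$ and the bracket is $s-n_1-n_2$) and $m=1$ (where $\widetilde{\psi}_1^{(2)}=-n_2u+n_1v$ and the bracket is $(s-n_1-n_2+1)\widetilde{\psi}_1^{(2)}$) already exhibit the mechanism. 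Granting this, the whole expression becomes $\widetilde{\psi}_m^{(2)}(u,v)\big[\phi(s+1)(s-n_1-n_2+m)-(n_3-k+1+s)\phi(s)\big]$, so it remains to verify the first-order recurrence $\phi(s+1)(s-n_1-n_2+m)=(n_3-k+1+s)\phi(s)$ at the finitely many lattice values $k-n_3\le s\le k-1$; this is a boundary relation satisfied by the univariate Hahn polynomial $\widetilde{\psi}_m^{(1)}$ in the variable $k-s$, checkable directly from \eqref{psim1}. This collection step, i.e. proving that the bivariate factor obeys exactly the contiguous relation matching the univariate recurrence, is the main obstacle; alternatively one may invoke \cite{DPJ}, where the $\widetilde{\psi}_m$ are produced as the solutions of this very equation.

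Finally, independence and count. Restricting to the edge $v=0$ gives $\widetilde{\psi}_m^{(2)}(u,0)=(n_2-m+1)_m\,(-u)_m$, so $\widetilde{\psi}_m(u,0)$ carries the factor $(-u)_m$ and hence vanishes at $u=0,1,\dots,m-1$; evaluating a hypothetical relation $\sum_m c_m\widetilde{\psi}_m=0$ successively at these points forces $c_0=c_1=\cdots=0$ (using that the relevant values $\widetilde{\psi}_m^{(1)}$ are nonzero), which yields a triangular structure establishing linear independence, the fully boundary-robust version of this being the orthogonality of the bivariate Hahn system from \cite{DPJ}. Since the number of admissible $m$, namely $0\vee(k-n_3)\le m\le n_1\wedge n_2\wedge k\wedge(n_1+n_2-k)$ incremented by one, equals $\mu$, the multiplicity of $\left[N-k,k\right]$ in $V_\lambda$ and hence, by Frobenius reciprocity, the dimension of the space of $G_{\mathbf{n}}$-invariants, the $\mu$ linearly independent invariants $\psi_m$ form a basis.
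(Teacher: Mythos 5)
Your skeleton coincides with the paper's: the paper's entire proof of this proposition is the one-line remark that $\widetilde{\psi}_{m}\left(  u,v\right)  $ satisfies the difference equation of Proposition \ref{diffceqn}, citing \cite[p.63, (3.11)]{DPJ}, together with the parenthetical count of admissible $m$ stated after (\ref{psim2}). So your fallback option of invoking \cite{DPJ} is literally the paper's argument, and your added bookkeeping (invariance, membership via Proposition \ref{diffceqn}, the triangular independence argument from the factor $\left(  -u\right)  _{m}$ along $v=0$, Frobenius reciprocity for the count) is correct and makes explicit what the paper leaves implicit. One small point there: the polynomial $\psi_{m}$ only sees the array $\widetilde{\psi}_{m}$ at levels $u+v\geq k-n_{3}$ (since $e_{k-u-v}\left(  x_{\ast}^{\left(  3\right)  }\right)  =0$ otherwise), so your evaluation points $\left(  m,0\right)  $ are legitimate precisely because $m\geq m_{L}=0\vee\left(  k-n_{3}\right)  $; injectivity on arbitrary coefficient arrays does not hold.

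Your self-contained verification of the difference equation, however, contains one step that fails as written. The bivariate contiguity
\[
\left(  u-n_{1}\right)  \widetilde{\psi}_{m}^{\left(  2\right)  }\left(
u+1,v\right)  +\left(  v-n_{2}\right)  \widetilde{\psi}_{m}^{\left(  2\right)
}\left(  u,v+1\right)  =\left(  u+v-n_{1}-n_{2}+m\right)  \widetilde{\psi
}_{m}^{\left(  2\right)  }\left(  u,v\right)
\]
is a correct polynomial identity, but the univariate recurrence $\phi\left(  s+1\right)  \left(  s-n_{1}-n_{2}+m\right)  =\left(  n_{3}-k+1+s\right)  \phi\left(  s\right)  $ with $\phi=\widetilde{\psi}_{m}^{\left(  1\right)  }$ does \emph{not} hold on all of $k-n_{3}\leq s\leq k-1$: it holds only for $m\leq s\leq k-1$, which is exactly where it is needed. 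Concretely, take $m=1$, $k=2$, $n_{1}=n_{2}=n_{3}=2$: then $\widetilde{\psi}_{1}^{\left(  1\right)  }\left(  t\right)  =6-4t$, and at $s=0$ (inside your stated range, since $k-n_{3}=0$) the recurrence reads $2\cdot\left(  -3\right)  =1\cdot6$, false; at $s=1=m$ it reads $\left(  -2\right)  \left(  -2\right)  =2\cdot2$, true. The repair is already present in your own factorization: the bracket is multiplied by $\widetilde{\psi}_{m}^{\left(  2\right)  }\left(  u,v\right)  $, which by Lemma \ref{uv<m} vanishes identically at every level $s<m$, so the recurrence need only be verified for $m\leq s\leq k-1$ --- but had you carried out the advertised ``direct check from (\ref{psim1})'' on the range you wrote down, it would have failed for every admissible $m>0\vee\left(  k-n_{3}\right)  $. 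Two further remarks confirm your instincts elsewhere: the recurrence also genuinely fails at $s=k$ (for $m=0$, $k=1$ the residual is $N\left(  N-1\right)  \neq0$), so excluding $s=k$ was necessary; and indeed the condition $\sum_{i=1}^{N}\frac{\partial}{\partial x_{i}}p=0$ imposes one equation per product $e_{a}e_{b}e_{c}$ with $a+b+c=k-1$, hence at base levels $u+v\leq k-1$ only, which is how the printed range $k-n_{3}\leq u+v\leq k$ in Proposition \ref{diffceqn} must be read.
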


This follows from the fact that $\widetilde{\psi}_{m}\left(  u,v\right)  $
satisfies the difference equation in Proposition \ref{diffceqn} (see
\cite[p.63, (3.11)]{DPJ}).

There are useful special values:%
\begin{align}
\widetilde{\psi}_{m}^{\left(  1\right)  }\left(  m\right)   &  =\left(
k-m\right)  !\left(  n_{1}+n_{2}-k-m+1\right)  _{k-m}\label{psim0}\\
\widetilde{\psi}_{m}^{\left(  2\right)  }\left(  u,0\right)   &  =\left(
n_{2}-m+1\right)  _{m}\left(  -u\right)  _{m}=\left(  -1\right)  ^{m}\left(
-n_{2}\right)  _{m}\left(  -u\right)  _{m}\nonumber\\
\widetilde{\psi}_{m}\left(  m,0\right)   &  =\left(  -1\right)  ^{k-m}\left(
m-n_{1}-n_{2}\right)  _{k-m}\left(  -n_{2}\right)  _{m}m!\left(  k-m\right)
!,\nonumber
\end{align}

\begin{lemma}
\label{uv<m}If $u+v<m$ then $\widetilde{\psi}_{m}^{\left(  2\right)  }\left(
u,v\right)  =0$.
\end{lemma}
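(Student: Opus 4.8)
The plan is to prove the vanishing termwise: I will show that under the hypothesis $u+v<m$ \emph{every} summand in the defining sum \eqref{psim2} for $\widetilde{\psi}_{m}^{\left(2\right)}\left(u,v\right)$ is zero, whence the whole sum vanishes. Recall that $u$ and $v$ are nonnegative integers, since they index the elementary symmetric functions in the expansion of $\psi_{m}$. The generic summand is
\[
\frac{\left(-m\right)_{i}}{i!}\left(n_{1}-m+1\right)_{i}\left(n_{2}-m+1\right)_{m-i}\left(-v\right)_{i}\left(-u\right)_{m-i},\qquad 0\leq i\leq m,
\]
and among its factors the only ones that can vanish on account of the integrality of $u$ and $v$ are the two Pochhammer symbols $\left(-v\right)_{i}$ and $\left(-u\right)_{m-i}$; the remaining factors play no role in the argument.

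First I would record exactly when each of these two factors is nonzero. Writing $\left(-v\right)_{i}=\prod_{l=0}^{i-1}\left(l-v\right)$, the product contains the factor $0$ precisely when the value $l=v$ occurs in the range $0\leq l\leq i-1$, i.e.\ when $i\geq v+1$; hence $\left(-v\right)_{i}\neq0$ if and only if $i\leq v$. In the same way $\left(-u\right)_{m-i}=\prod_{l=0}^{m-i-1}\left(l-u\right)$ vanishes exactly when $m-i\geq u+1$, so $\left(-u\right)_{m-i}\neq0$ if and only if $i\geq m-u$.

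Combining the two conditions, a summand can be nonzero only for those indices $i$ satisfying $m-u\leq i\leq v$. Such an $i$ exists only if $m-u\leq v$, that is $m\leq u+v$. Under the hypothesis $u+v<m$ this range is empty, so no term survives and $\widetilde{\psi}_{m}^{\left(2\right)}\left(u,v\right)=0$, as claimed.

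This argument is essentially bookkeeping with Pochhammer zeros, and I expect no genuine obstacle; the only point requiring care is keeping the sign conventions of $\left(-v\right)_{i}$ and $\left(-u\right)_{m-i}$ straight, since an off-by-one slip in either range would spoil the conclusion. The content of the lemma is a triangularity statement — that $\widetilde{\psi}_{m}^{\left(2\right)}$ is supported on $u+v\geq m$ — which is consistent with the special value $\widetilde{\psi}_{m}^{\left(2\right)}\left(u,0\right)$ in \eqref{psim0} (nonzero only through $\left(-u\right)_{m}$, hence requiring $u\geq m$), and it should simplify the later projection computations.
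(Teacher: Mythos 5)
Your proof is correct and takes essentially the same route as the paper, which likewise notes that the $i$-th term of \eqref{psim2} is nonzero only if $i\leq v$ and $m-i\leq u$, i.e.\ $m-u\leq i\leq v$, a range that is empty when $u+v<m$. Your version merely makes the Pochhammer-zero bookkeeping explicit, and your accounting of when $\left(-v\right)_{i}$ and $\left(-u\right)_{m-i}$ vanish is accurate.
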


\begin{proof}
The term for $i$ in $\widetilde{\psi}_{m}^{\left(  2\right)  }\left(
u,v\right)  $ is nonzero only if $i\leq v$ and $m-i\leq u$, that is, $m-u\leq
i\leq v$.
\end{proof}

An analogous structure is known for isotypes of $3$-part partitions, due to F.
Scarabotti \cite{Sc}. We are not pursuing this situation here because of the
complexity due to the added dimension and the numerous conditions on the parameters.

\section{\label{twocyc}Spherical functions at a $2$-cycle}

For an invariant polynomial $\psi\left(  x\right)  =\sum\limits_{u,v,u+v\leq
k}f\left(  u,v\right)  e_{u}\left(  x_{\ast}^{\left(  1\right)  }\right)
e_{v}\left(  x_{\ast}^{\left(  2\right)  }\right)  e_{k-u-v}\left(  x_{\ast
}^{\left(  3\right)  }\right)  $ we will determine $\rho\psi\left(
xg_{2}\right)  $ where $g_{2}=\left(  x_{1}^{\left(  1\right)  }%
,x_{1}^{\left(  2\right)  }\right)  $. In this section we will show that $\rho
g_{2}\psi_{m}=\frac{1}{n_{1}n_{2}}\left\{  \left(  m-n_{1}\right)  \left(
m-n_{2}\right)  -m\right\}  \psi_{m}$ for each $m$. This coefficient is then
used in Proposition \ref{Bsum}. Compute term-by-term. Let
\begin{align*}
p  &  =e_{u}\left(  x_{\ast}^{\left(  1\right)  }\right)  e_{v}\left(
x_{\ast}^{\left(  2\right)  }\right)  e_{k-u-v}\left(  x_{\ast}^{\left(
3\right)  }\right) \\
&  =\left\{  x_{1}^{\left(  1\right)  }e_{u-1}\left(  x_{>}^{\left(  1\right)
}\right)  +e_{u}\left(  x_{>}^{\left(  1\right)  }\right)  \right\}  \left\{
x_{1}^{\left(  2\right)  }e_{v-1}\left(  x_{>}^{\left(  2\right)  }\right)
+e_{v}\left(  x_{>}^{\left(  2\right)  }\right)  \right\}  e_{k-u-v}\left(
x_{\ast}^{\left(  3\right)  }\right) \\
g_{2}p  &  =x_{1}^{\left(  1\right)  }e_{u-1}\left(  x_{>}^{\left(  1\right)
}\right)  x_{1}^{\left(  2\right)  }e_{v-1}\left(  x_{>}^{\left(  2\right)
}\right)  e_{k-u-v}\left(  x_{\ast}^{\left(  3\right)  }\right)
+x_{1}^{\left(  1\right)  }e_{u}\left(  x_{>}^{\left(  1\right)  }\right)
e_{v-1}\left(  x_{>}^{\left(  2\right)  }\right)  e_{k-u-v}\left(  x_{\ast
}^{\left(  3\right)  }\right) \\
&  +e_{u-1}\left(  x_{>}^{\left(  1\right)  }\right)  x_{1}^{\left(  2\right)
}e_{v}\left(  x_{>}^{\left(  2\right)  }\right)  e_{k-u-v}\left(  x_{\ast
}^{\left(  3\right)  }\right)  +e_{u}\left(  x_{>}^{\left(  1\right)
}\right)  e_{v}\left(  x_{>}^{\left(  2\right)  }\right)  e_{k-u-v}\left(
x_{\ast}^{\left(  3\right)  }\right)  .
\end{align*}
Apply $\rho$ and use Lemma \ref{symme}%
\[
\rho g_{2}p=\frac{1}{n_{1}n_{2}}\left\{
\begin{array}
[c]{c}%
\left(  \left(  n_{1}-u\right)  \left(  n_{2}-v\right)  +uv\right)
e_{u}\left(  x_{\ast}^{\left(  1\right)  }\right)  e_{v}\left(  x_{\ast
}^{\left(  2\right)  }\right)  e_{k-u-v}\left(  x_{\ast}^{\left(  3\right)
}\right) \\
+\left(  u+1\right)  \left(  n_{2}-v+1\right)  e_{u+1}\left(  x_{\ast
}^{\left(  1\right)  }\right)  e_{v-1}\left(  x_{\ast}^{\left(  2\right)
}\right)  e_{k-u-v}\left(  x_{\ast}^{\left(  3\right)  }\right) \\
+\left(  n_{1}-u+1\right)  \left(  v+1\right)  e_{u-1}\left(  x_{\ast
}^{\left(  1\right)  }\right)  e_{v+1}\left(  x_{\ast}^{\left(  2\right)
}\right)  e_{k-u-v}\left(  x_{\ast}^{\left(  3\right)  }\right)
\end{array}
\right\}  .
\]
Thus
\begin{align*}
\rho g_{2}\psi &  =\sum\limits_{u,v,u+v\leq k}\left\{
\begin{array}
[c]{c}%
\left(  \left(  n_{1}-u\right)  \left(  n_{2}-v\right)  +uv\right)  f\left(
u,v\right) \\
+u\left(  n_{2}-v\right)  f\left(  u-1,v+1\right)  +\left(  n_{1}-u\right)
vf\left(  u+1,v-1\right)
\end{array}
\right\} \\
&  \times e_{u}\left(  x_{\ast}^{\left(  1\right)  }\right)  e_{v}\left(
x_{\ast}^{\left(  2\right)  }\right)  e_{k-u-v}\left(  x_{\ast}^{\left(
3\right)  }\right)  .
\end{align*}
Observe that values like $f\left(  -1,v+1\right)  $ or $f\left(
n_{1}+1,v\right)  $ do not appear. Let $f\left(  u,v\right)  =\widetilde{\psi
}_{m}^{\left(  1\right)  }\left(  u+v\right)  \widetilde{\psi}_{m}^{\left(
2\right)  }\left(  u,v\right)  $ from (\ref{psim1}), (\ref{psim2}). By Lemma
\ref{uv<m} $u+v<m$ implies $\widetilde{\psi}_{m}^{\left(  2\right)  }\left(
u,v\right)  =0$.

Let $C\left(  u,v,i\right)  $ denote the $i$-term in the sum (\ref{psim2}) for
$\widetilde{\psi}_{m}^{\left(  2\right)  }\left(  u,v\right)  $. We will
express $u\left(  n_{2}-v\right)  C\left(  u-1,v+1\right)  +\left(
n_{1}-u\right)  vC\left(  u+1,v-1,i\right)  $ in terms of $C\left(
u,v,i-1\right)  ,C\left(  u,v,i+1\right)  $ and $C\left(  u,v,i\right)  $. It
is more readable to display ratios like $C\left(  u+1,v-1,i\right)  /C\left(
u,v,i\right)  $ (resulting from straightforward calculations)%
\begin{align*}
a_{1}\left(  i\right)   &  :=u\left(  n_{2}-v\right)  \frac{C\left(
u-1,v+1,i\right)  }{C\left(  u,v,i\right)  }=\frac{\left(  n_{2}-v\right)
\left(  v+1\right)  \left(  m-i-u\right)  }{i-v-1}\\
a_{2}\left(  i\right)   &  :=\left(  n_{1}-u\right)  v\frac{C\left(
u+1,v-1,i\right)  }{C\left(  u,v,i\right)  }=\frac{\left(  n_{1}-u\right)
\left(  i-v\right)  \left(  u+1\right)  }{m-u-i-1}\\
b_{1}\left(  i\right)   &  :=-\left(  m+1-i\right)  \left(  n_{1}-m+i\right)
\frac{C\left(  u,v,i-1\right)  }{C\left(  u,v,i\right)  }=\frac{i\left(
m-i-u\right)  \left(  n_{2}+1-i\right)  }{i-v-1}\\
b_{2}\left(  i\right)   &  :=-\left(  i+1\right)  \left(  n_{2}-i\right)
\frac{C\left(  u,v,i+1\right)  }{C\left(  u,v,i\right)  }=\frac{\left(
-m+i\right)  \left(  i-v\right)  \left(  n_{1}-m+1+i\right)  }{1+i+u-m}.
\end{align*}
Then $a_{1}\left(  i\right)  -b_{1}\left(  i\right)  =\left(  i-v+n_{2}%
\right)  \left(  m-i-u\right)  ,~a_{2}\left(  i\right)  -b_{2}\left(
i\right)  =\left(  i-m+n_{1}-u\right)  \left(  v-i\right)  $. Thus%
\begin{align*}
&  u\left(  n_{2}-v\right)  C\left(  u-1,v+1,i\right)  +\left(  n_{1}%
-u\right)  vC\left(  u+1,v-1,i\right)  \\
&  =-\left(  m+1-i\right)  \left(  n_{1}-m+i\right)  C\left(  u,v,i-1\right)
-\left(  i+1\right)  \left(  n_{2}-i\right)  C\left(  u,v,i+1\right)  \\
&  +\left(  a_{1}\left(  i\right)  -b_{1}\left(  i\right)  +a_{2}\left(
i\right)  -b_{2}\left(  i\right)  \right)  C\left(  u,v,i\right)  .
\end{align*}
Apply $\sum_{i=0}^{m}$ to each line: the first line gives $u\left(
n_{2}-v\right)  \widetilde{\psi}_{m}^{\left(  2\right)  }\left(
u-1,v+1\right)  +\left(  n_{1}-u\right)  v\widetilde{\psi}_{m}^{\left(
2\right)  }\left(  u+1,v-1\right)  $, the second and third yield%
\begin{align*}
&  -\sum_{i=1}^{m}\left(  m+1-i\right)  \left(  n_{1}-m+i\right)  C\left(
u,v,i-1\right)  -\sum_{i=0}^{m-1}\left(  i+1\right)  \left(  n_{2}-i\right)
C\left(  u,v,i+1\right)  \\
&  +\sum_{i=0}^{m}\left(  a_{1}\left(  i\right)  -b_{1}\left(  i\right)
+a_{2}\left(  i\right)  -b_{2}\left(  i\right)  \right)  C\left(
u,v,i\right)  \\
&  =\sum_{i=0}^{m}\left\{  \left(  i-m\right)  \left(  n_{1}-m+1-i\right)
-i\left(  n_{2}-i\right)  +a_{1}\left(  i\right)  -b_{1}\left(  i\right)
+a_{2}\left(  i\right)  -b_{2}\left(  i\right)  \right\}  C\left(
u,v,i\right)  \\
&  =\sum_{i=0}^{m}\left\{  m\left(  m-n_{1}-n_{2}-1\right)  +n_{2}%
u+n_{1}v-2uv\right\}  C\left(  u,v,i\right)  ,
\end{align*}
a multiple of $\widetilde{\psi}_{m}^{\left(  2\right)  }\left(  u,v\right)  $.
The changes of summation variable are valid since $b_{1}\left(  0\right)
=0=b_{2}\left(  m\right)  $. Now add $\left(  \left(  n_{1}-u\right)  \left(
n_{2}-v\right)  +uv\right)  \widetilde{\psi}_{m}^{\left(  2\right)  }\left(
u,v\right)  $ to both sides and obtain

$\rho g_{2}\psi_{m}=\frac{1}{n_{1}n_{2}}\left\{  \left(  m-n_{1}\right)
\left(  m-n_{2}\right)  -m\right\}  \psi_{m}$. Let $m_{L}:=0\vee\left(
k-n_{3}\right)  $ and $m_{U}:=n_{1}\wedge n_{2}\wedge k\wedge\left(
n_{1}+n_{2}-k\right)  $. Thus the spherical function%
\[
\Phi^{\left[  N-k,k\right]  }\left(  g_{2}\right)  =\frac{1}{n_{1}n_{2}}%
\sum_{m=m_{L}}^{m_{U}}\left\{  \left(  m-n_{1}\right)  \left(  m-n_{2}\right)
-m\right\}  ,
\]
If $k\leq n_{1}\wedge n_{2}\wedge n_{3}$ then $m_{L}=0,m_{U}=k$ and
\[
\Phi^{\left[  N-k,k\right]  }\left(  g_{2}\right)  =\frac{k+1}{n_{1}n_{2}%
}\left(  n_{1}n_{2}-\frac{1}{2}k\left(  n_{1}+n_{2}\right)  +\frac{1}%
{3}k\left(  k-1\right)  \right)  .
\]
More generally let $\mu=m_{U}-m_{L}$ then
\begin{equation}
\Phi^{\left[  N-k,k\right]  }\left(  g_{2}\right)  =\frac{\mu+1}{n_{1}n_{2}%
}\left(  m_{L}^{2}+n_{1}n_{2}-\left(  m_{L}+\frac{\mu}{2}\right)  \left(
n_{1}+n_{2}\right)  +\left(  m_{L}+\frac{\mu}{3}\right)  \left(  \mu-1\right)
\right)  . \label{cycle2}%
\end{equation}

The corresponding situations for the $2$-cycles $\left(  x_{1}^{\left(
2\right)  },x_{1}^{\left(  3\right)  }\right)  $ and $\left(  x_{1}^{\left(
1\right)  },x_{1}^{\left(  3\right)  }\right)  $ are obtained by suitably
permuting the parameters $n_{1},n_{2},n_{3}$ in the formula. The multiplicity
of $1_{G_{\mathbf{n}}}$ in $V_{k}$ is symmetric in $\left\{  n_{i}\right\}  $,
namely\linebreak\ $\min\left\{  k,n_{1},\ldots,n_{1}+n_{2}-k,\ldots\right\}
+1$.

\subsection{Case $n_{1}+n_{2}=N$}

The same scheme can be used for $\mathbf{n}=\left(  n_{1},n_{2}\right)  $
(with $N=n_{1}+n_{2}$): the multiplicity of $\left[  N-k,k\right]  $ is one
for $0\leq k\leq n_{1}\wedge n_{2}$, the unique invariant polynomial (with
$\sum_{i=1}^{N}\frac{\partial}{\partial x_{i}}\psi\left(  x\right)  =0$) is%
\begin{align*}
\psi\left(  x\right)   &  :=\sum_{u=0}^{k}f\left(  u\right)  e_{u}\left(
x_{\ast}^{\left(  1\right)  }\right)  e_{k-u}\left(  x_{\ast}^{\left(
2\right)  }\right) \\
f\left(  u\right)   &  :=\left(  -1\right)  ^{u}\left(  n_{2}-k+1\right)
_{u}\left(  n_{1}-k+1\right)  _{k-u}%
\end{align*}
and with a similar calculation to the previous one%
\[
\rho g_{2}\psi=\sum_{u=0}^{k}\left\{
\begin{array}
[c]{c}%
\left(  \left(  n_{1}-u\right)  \left(  n_{2}-v\right)  +uv\right)  f\left(
u\right) \\
+u\left(  n_{2}-v\right)  f\left(  u-1\right)  +v\left(  n_{1}-u\right)
f\left(  u+1\right)
\end{array}
\right\}  e_{u}\left(  x_{\ast}^{\left(  1\right)  }\right)  e_{k-u}\left(
x_{\ast}^{\left(  2\right)  }\right)
\]
with $v=k-u$. We find%
\[
u\left(  n_{2}-v\right)  \frac{f\left(  u-1\right)  }{f\left(  u\right)
}+v\left(  n_{1}-u\right)  \frac{f\left(  u+1\right)  }{f\left(  u\right)
}=-u\left(  n_{1}+1-u\right)  -\left(  k-u\right)  \left(  n_{2}-k+1+u\right)
,
\]
and adding $\left(  n_{1}-u\right)  \left(  n_{2}-k-u\right)  +u\left(
k-u\right)  $ to both sides we obtain%
\begin{align*}
\rho g_{2}\psi &  =\left(  n_{1}n_{2}-\left(  n_{1}+n_{2}\right)
k+k^{2}-k\right)  \sum_{u=0}^{k}f\left(  u\right)  e_{u}\left(  x_{\ast
}^{\left(  1\right)  }\right)  e_{k-u}\left(  x_{\ast}^{\left(  2\right)
}\right) \\
&  =\left(  n_{1}n_{2}-\left(  n_{1}+n_{2}\right)  k+k^{2}-k\right)  \psi.
\end{align*}
Thus the spherical function%
\[
\Phi^{\left[  N-k,k\right]  }\left(  g_{2}\right)  =\frac{1}{n_{1}n_{2}%
}\left(  n_{1}n_{2}-\left(  n_{1}+n_{2}\right)  k+k^{2}-k\right)  .
\]

\section{\label{threecyc}Spherical functions at a $3$-cycle}

We use the $3$ -cycle $g_{3}=\left(  x_{1}^{\left(  1\right)  },x_{1}^{\left(
2\right)  },x_{1}^{\left(  3\right)  }\right)  $. We will determine $\rho
\psi\left(  xg_{3}\right)  $ for an invariant polynomial%
\begin{align*}
\psi\left(  x\right)   &  =\sum\limits_{u,v,u+v\leq k}f\left(  u,v\right)
\left\{  x_{1}^{\left(  1\right)  }e_{u-1}\left(  x_{>}^{\left(  1\right)
}\right)  +e_{u}\left(  x_{>}^{\left(  1\right)  }\right)  \right\}  \left\{
x_{1}^{\left(  2\right)  }e_{v-1}\left(  x_{>}^{\left(  2\right)  }\right)
+e_{v}\left(  x_{>}^{\left(  2\right)  }\right)  \right\}  \\
&  \times\left\{  x_{1}^{\left(  3\right)  }e_{k-u-v-1}\left(  x_{>}^{\left(
3\right)  }\right)  +e_{k-u-v}\left(  x_{>}^{\left(  3\right)  }\right)
\right\}
\end{align*}
The computation is quite a bit more involved than the $2$-cycle case. Apply
$g_{3}$ to the $\left(  u,v\right)  $-term; there are $8$ terms in the
expansion (and abbreviate $k-u-v=w$)%
\begin{align*}
&  x_{1}^{\left(  2\right)  }e_{u-1}\left(  x_{>}^{\left(  1\right)  }\right)
x_{1}^{\left(  3\right)  }e_{v-1}\left(  x_{>}^{\left(  2\right)  }\right)
x_{1}^{\left(  1\right)  }e_{w-1}\left(  x_{>}^{\left(  3\right)  }\right)
+x_{1}^{\left(  2\right)  }e_{u-1}\left(  x_{>}^{\left(  1\right)  }\right)
x_{1}^{\left(  3\right)  }e_{v-1}\left(  x_{>}^{\left(  2\right)  }\right)
e_{w}\left(  x_{>}^{\left(  3\right)  }\right)  \\
&  +x_{1}^{\left(  2\right)  }e_{u-1}\left(  x_{>}^{\left(  1\right)
}\right)  e_{v}\left(  x_{>}^{\left(  2\right)  }\right)  x_{1}^{\left(
1\right)  }e_{w-1}\left(  x_{>}^{\left(  3\right)  }\right)  +x_{1}^{\left(
2\right)  }e_{u-1}\left(  x_{>}^{\left(  1\right)  }\right)  e_{v}\left(
x_{>}^{\left(  2\right)  }\right)  e_{w}\left(  x_{>}^{\left(  3\right)
}\right)  \\
&  +e_{u}\left(  x_{>}^{\left(  1\right)  }\right)  x_{1}^{\left(  3\right)
}e_{v-1}\left(  x_{>}^{\left(  2\right)  }\right)  x_{1}^{\left(  1\right)
}e_{w-1}\left(  x_{>}^{\left(  3\right)  }\right)  +e_{u}\left(
x_{>}^{\left(  1\right)  }\right)  x_{1}^{\left(  3\right)  }e_{v-1}\left(
x_{>}^{\left(  2\right)  }\right)  e_{w}\left(  x_{>}^{\left(  3\right)
}\right)  \\
&  +e_{u}\left(  x_{>}^{\left(  1\right)  }\right)  e_{v}\left(
x_{>}^{\left(  2\right)  }\right)  x_{1}^{\left(  1\right)  }e_{w-1}\left(
x_{>}^{\left(  3\right)  }\right)  +e_{u}\left(  x_{>}^{\left(  1\right)
}\right)  e_{v}\left(  x_{>}^{\left(  2\right)  }\right)  e_{w}\left(
x_{>}^{\left(  3\right)  }\right)  .
\end{align*}
Symmetrize each term using Lemma \ref{symme}, and denote%
\[
P\left(  u,v,w\right)  :=\frac{1}{n_{1}n_{2}n_{3}}e_{u}\left(  x_{\ast
}^{\left(  1\right)  }\right)  e_{v}\left(  x_{\ast}^{\left(  2\right)
}\right)  e_{w}\left(  x_{\ast}^{\left(  3\right)  }\right)
\]
:
\begin{align*}
&  uvwP\left(  u,v,w\right)  +\left(  n_{1}-u+1\right)  v\left(  w+1\right)
P\left(  u-1,v,w+1\right)  \\
&  +u\left(  v+1\right)  \left(  n_{3}-w+1\right)  P\left(  u,v+1,w-1\right)
+\left(  n_{1}-u+1\right)  \left(  v+1\right)  \left(  n_{3}-w\right)
P\left(  u-1,v+1,w\right)  \\
&  +\left(  u+1\right)  \left(  n_{2}-v+1\right)  wP\left(  u+1,v-1,w\right)
+\left(  n_{1}-u\right)  \left(  n_{2}-v+1\right)  \left(  w+1\right)
P\left(  u,v-1,w+1\right)  \\
&  +\left(  u+1\right)  \left(  n_{2}-v\right)  \left(  n_{3}-w+1\right)
P\left(  u+1,v,w-1\right)  +\left(  n_{1}-u\right)  (n_{2}-v)\left(
n_{3}-w\right)  P(u,v,w)
\end{align*}
respectively. Changing indices as appropriate we obtain%
\begin{align*}
\rho\psi\left(  xg_{3}\right)   &  =\sum\limits_{u,v,u+v\leq k}P\left(
u,v,w\right)  \times\\
&  \left\{
\begin{array}
[c]{c}%
uvwf\left(  u,v\right)  +\left(  n_{1}-u\right)  vwf\left(  u+1,v\right)  \\
+uv\left(  n_{3}-w\right)  f\left(  u,v-1\right)  +\left(  n_{1}-u\right)
v\left(  n_{3}-w\right)  f\left(  u+1,v-1\right)  \\
+u\left(  n_{2}-v\right)  wf\left(  u-1,v+1\right)  +\left(  n_{1}-u\right)
\left(  n_{2}-v\right)  wf\left(  u,v+1\right)  \\
+u\left(  n_{2}-v\right)  \left(  n_{3}-w\right)  f\left(  u-1,v\right)
+\left(  n_{1}-u\right)  \left(  n_{2}-v\right)  \left(  n_{3}-w\right)
f\left(  u,v\right)
\end{array}
\right\}  ..
\end{align*}
Mow set $f\left(  u,v\right)  =\widetilde{\psi}_{m}\left(  u,v\right)  $ and
determine the coefficient $c_{m}$ in $\rho\psi_{m}\left(  xg_{3}\right)
=\sum_{n}c_{n}\psi_{n}\left(  x\right)  $ (equivalently the expression in
$\left\{  \cdot\right\}  $ denoted $S_{m}\left(  u,v\right)  $ equals
$\sum_{n}c_{n}\widetilde{\psi}_{n}\left(  u,v\right)  $).

In $\psi_{m}^{\left(  1\right)  }\left(  m\right)  $ only the $j=k-m$ term is
nonzero, and in $\widetilde{\psi}_{m}^{\left(  2\right)  }\left(  u,0\right)
$ only the $i=0$ term is nonzero. Furthermore $u<m$ implies $\widetilde{\psi
}_{m}\left(  u,0\right)  =0,$ (because of the factor $\left(  -u\right)  _{m}$
in $\widetilde{\psi}_{m}^{\left(  2\right)  }\left(  u,v\right)  $). By Lemma
\ref{uv<m} $u+v<n$ implies $\widetilde{\psi}_{n}\left(  u,v\right)  =0$. Thus
$S_{m}\left(  u,0\right)  =0$ for $u<m-1$ (note the terms $vf\left(
\ast,v-1\right)  =0$). The following is used to determine the coefficients
needed for Proposition \ref{Bsum}.

\begin{proposition}
\label{getcm}Suppose $f=\sum_{n=0}^{k}c_{n}\widetilde{\psi}_{n}$ and $f\left(
u,0\right)  =0$ for $u<m-1$ then%
\begin{equation}
c_{m}=\frac{1}{\widetilde{\psi}_{m}\left(  m,0\right)  }\left\{  f\left(
m.0\right)  -\frac{m\left(  k-m-n_{3}\right)  }{n_{1}+n_{2}-2m+2}f\left(
m-1,0\right)  \right\}  . \label{cmf}%
\end{equation}

\end{proposition}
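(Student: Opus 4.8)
The plan is to exploit the triangular vanishing pattern of the values $\widetilde{\psi}_n(u,0)$. By Lemma \ref{uv<m} we have $\widetilde{\psi}_n(u,0)=0$ whenever $u<n$ (the underlying reason is the factor $(-u)_n$ in $\widetilde{\psi}_n^{(2)}(u,0)$, cf. (\ref{psim0})), while the diagonal values $\widetilde{\psi}_n(n,0)$ supplied by (\ref{psim0}) are nonzero in the parameter range of interest. Thus, evaluating the expansion $f=\sum_{n=0}^k c_n\widetilde{\psi}_n$ at the points $(u,0)$, the relations $f(u,0)=\sum_{n\le u}c_n\widetilde{\psi}_n(u,0)$ form a lower-triangular system with nonzero diagonal.

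First I would run the triangular inversion. Taking $u=0,1,\ldots,m-2$ in turn and using the hypothesis $f(u,0)=0$, the nonvanishing of $\widetilde{\psi}_n(n,0)$ forces $c_0=c_1=\cdots=c_{m-2}=0$ by induction on $u$. With these coefficients eliminated, the two remaining relevant evaluations collapse to $f(m-1,0)=c_{m-1}\widetilde{\psi}_{m-1}(m-1,0)$ and $f(m,0)=c_{m-1}\widetilde{\psi}_{m-1}(m,0)+c_m\widetilde{\psi}_m(m,0)$. Solving the first for $c_{m-1}$ and substituting into the second gives
\[
c_m=\frac{1}{\widetilde{\psi}_m(m,0)}\left\{f(m,0)-\frac{\widetilde{\psi}_{m-1}(m,0)}{\widetilde{\psi}_{m-1}(m-1,0)}\,f(m-1,0)\right\},
\]
so the proposition reduces to verifying the single identity $\widetilde{\psi}_{m-1}(m,0)/\widetilde{\psi}_{m-1}(m-1,0)=m(k-m-n_3)/(n_1+n_2-2m+2)$.

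To evaluate that ratio I would factor $\widetilde{\psi}_{m-1}(u,0)=\widetilde{\psi}_{m-1}^{(1)}(u)\,\widetilde{\psi}_{m-1}^{(2)}(u,0)$ and handle the two pieces separately. The $\widetilde{\psi}^{(2)}$-part is immediate from (\ref{psim0}): since $\widetilde{\psi}_{m-1}^{(2)}(u,0)=(-1)^{m-1}(-n_2)_{m-1}(-u)_{m-1}$, the ratio of its value at $u=m$ to that at $u=m-1$ is $(-m)_{m-1}/(-(m-1))_{m-1}=m$. The $\widetilde{\psi}^{(1)}$-part carries the real work. The diagonal value $\widetilde{\psi}_{m-1}^{(1)}(m-1)$ is the special value in (\ref{psim0}) with $m$ replaced by $m-1$, namely $(k-m+1)!\,(n_1+n_2-k-m+2)_{k-m+1}$, but the off-diagonal value $\widetilde{\psi}_{m-1}^{(1)}(m)$ must be extracted from the defining sum (\ref{psim1}). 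The key simplification is that at $t=m$ the factor $(t-k)_j=(m-k)_j$ kills the top term $j=k-m+1$ (it acquires a zero factor), while $(m-1-t)_{k-m+1-j}=(-1)_{k-m+1-j}$ kills every term with $k-m+1-j\ge 2$; hence only the single index $j=k-m$ survives.

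The main obstacle is precisely this off-diagonal evaluation: one must carry out the Pochhammer bookkeeping for the surviving $j=k-m$ term and simplify. Doing so yields $\widetilde{\psi}_{m-1}^{(1)}(m)=-(k-m+1)!\,(n_1+n_2-k-m+2)_{k-m}\,(n_3-k+m)$, and dividing by $\widetilde{\psi}_{m-1}^{(1)}(m-1)$ collapses, via the splitting $(n_1+n_2-k-m+2)_{k-m+1}=(n_1+n_2-k-m+2)_{k-m}\,(n_1+n_2-2m+2)$ and the sign $-(n_3-k+m)=k-m-n_3$, to $(k-m-n_3)/(n_1+n_2-2m+2)$. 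Multiplying this by the $\widetilde{\psi}^{(2)}$-ratio $m$ produces the coefficient in (\ref{cmf}), completing the verification. (The factor $m$ also makes the formula consistent at the degenerate boundary $m=0$, where no constraint is imposed and the $f(m-1,0)$ term must drop out.)
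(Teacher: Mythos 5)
Your proof is correct and takes essentially the same route as the paper: the triangular vanishing $\widetilde{\psi}_n(u,0)=0$ for $u<n$ with nonzero diagonal values forces $c_j=0$ for $j<m-1$, and the coefficient $m(k-m-n_3)/(n_1+n_2-2m+2)$ is identified as the ratio $\widetilde{\psi}_{m-1}(m,0)/\widetilde{\psi}_{m-1}(m-1,0)$, computed factorwise using that only the terms $j=k-m+1$ and $j=k-m$ survive in $\widetilde{\psi}_{m-1}^{(1)}$ at $t=m-1$ and $t=m$ respectively. Your explicit Pochhammer evaluations of the surviving terms (which the paper leaves implicit) are accurate, so nothing is missing.
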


\begin{proof}
The coefficients $c_{j}=0$ for $j<m-1$. Indeed let $i:=\min\left\{
j:c_{j}\neq0\right\}  $ then $f\left(  i,0\right)  =c_{i}\psi_{i}\left(
i,0\right)  \neq0$, and thus $i\geq m-1$. It remains to show that
$\widetilde{\psi}_{m-1}\left(  m.0\right)  -\frac{m\left(  k-m-n_{3}\right)
}{n_{1}+n_{2}-2m+2}\widetilde{\psi}_{m-1}\left(  m-1,0\right)  =0$. From
$\widetilde{\psi}_{m-1}^{\left(  2\right)  }\left(  u,0\right)  \allowbreak
=\left(  n_{2}-m+2\right)  _{m-1}\left(  -u\right)  _{m-1}$ we find
$\widetilde{\psi}_{m-1}^{\left(  2\right)  }\left(  m,0\right)  /\widetilde
{\psi}_{m-1}^{\left(  2\right)  }\left(  m-1,0\right)  =m$. In the sum for
$\widetilde{\psi}_{m-1}^{\left(  1\right)  }\left(  u\right)  $ at $u=m-1$
only the $j=k-m+1$ term appears and at $u=m$ only the $j=k-m$ term appears.
Using this fact we find%
\begin{align*}
\frac{\widetilde{\psi}_{m-1}^{\left(  1\right)  }\left(  m\right)
}{\widetilde{\psi}_{m-1}^{\left(  1\right)  }\left(  m-1\right)  }  &
=\frac{k-m-n_{3}}{n_{1}+n_{2}-2m+2},\\
\frac{\widetilde{\psi}_{m-1}\left(  m,0\right)  }{\widetilde{\psi}%
_{m-1}\left(  m-1,0\right)  }  &  =\frac{m\left(  k-m-n_{3}\right)  }%
{n_{1}+n_{2}-2m+2},
\end{align*}
and this concludes the proof.
\end{proof}

We need to evaluate $\widetilde{\psi}_{m}\left(  u-1,1\right)  ,\widetilde
{\psi}_{m}\left(  u,1\right)  ,\widetilde{\psi}_{m}\left(  u-1,0\right)
,\widetilde{\psi}_{m}\left(  u,0\right)  $ at $u=m-1,m$. The first and third
of these vanish at $u=m-1$, by Lemma \ref{uv<m}. Besides the values in
formulas (\ref{psim0}) the following are needed:%
\[
\widetilde{\psi}_{m}^{\left(  1\right)  }\left(  m+1\right)  =-\left(
k-m\right)  !\left(  n_{1}+n_{2}-k-m+1\right)  _{k-m-1}\left(  n_{3}%
-k+m+1\right)  ;
\]

\begin{align*}
\widetilde{\psi}_{m}^{\left(  2\right)  }\left(  m,0\right)   &  =m!\left(
-n_{2}\right)  _{m}\\
\widetilde{\psi}_{m}^{\left(  2\right)  }\left(  m-1,1\right)   &  =m!\left(
n_{1}-m+1\right)  \left(  1-n_{2}\right)  _{m-1}\\
\widetilde{\psi}_{m}^{\left(  2\right)  }\left(  m,1\right)   &  =\left(
-1\right)  ^{m}m!\left(  n_{2}-m+1\right)  _{m-1}\left(  n_{2}-m\left(
n_{1}-m+1\right)  \right)  .
\end{align*}
To organize the calculations let%
\begin{align*}
A &  :=\widetilde{\psi}_{m}^{\left(  1\right)  }\left(  m+1\right)
/\widetilde{\psi}_{m}^{\left(  1\right)  }\left(  m\right)  =-\frac
{n_{3}-k+m+1}{n_{1}+n_{2}-2m}\\
B_{1} &  :=\widetilde{\psi}_{m}^{\left(  2\right)  }\left(  m-1,1\right)
/\widetilde{\psi}_{m}^{\left(  2\right)  }\left(  m,0\right)  =-\frac
{n_{1}-m+1}{n_{2}}\\
B_{2} &  :=\widetilde{\psi}_{m}^{\left(  2\right)  }\left(  m,1\right)
/\widetilde{\psi}_{m}^{\left(  2\right)  }\left(  m,0\right)  =\frac
{n_{2}-m\left(  n_{1}-m+1\right)  }{n_{2}}%
\end{align*}%
\begin{align*}
\mathcal{T}f\left(  u,v\right)   &  :=\frac{1}{\widetilde{\psi}_{m}^{\left(
1\right)  }\left(  m\right)  \widetilde{\psi}_{m}^{\left(  2\right)  }\left(
m,0\right)  }\left\{  f\left(  u,0\right)  -C_{m}f\left(  u-1,0\right)
\right\}  \\
C_{m} &  :=\frac{m\left(  k-m-n_{3}\right)  }{n_{1}+n_{2}-2m+2}.
\end{align*}
Three of the $\mathcal{T}$- evaluations are nonzero:%
\begin{align*}
\mathcal{T}\left\{  u\left(  n_{2}-v\right)  wf\left(  u-1,v+1\right)
\right\}   &  =mn_{2}\left(  k-m\right)  B_{1},\\
\mathcal{T}\left\{  \left(  n_{1}-u\right)  \left(  n_{2}-v\right)  wf\left(
u,v+1\right)  \right\}   &  =\left(  n_{1}-m\right)  n_{2}\left(  k-m\right)
AB_{2}\\
&  -\left(  n_{1}-m+1\right)  n_{2}\left(  k-m+1\right)  CB_{1},\\
\mathcal{T}\left\{  \left(  n_{1}-u\right)  \left(  n_{2}-v\right)  \left(
n_{3}-w\right)  f\left(  u,v\right)  \right\}   &  =\left(  n_{1}-m\right)
n_{2}\left(  n_{3}-k+m\right)  .
\end{align*}
The omitted cases are due to $\widetilde{\psi}_{m}^{\left(  2\right)  }\left(
m-1,0\right)  =0=\widetilde{\psi}_{m}^{\left(  2\right)  }\left(
m-2,0\right)  $. Let%
\[
\xi\left(  k,m\right)  :=\left(  m+1\right)  \left(  n_{3}-k+m+1\right)
(k-m)\frac{n_{1}n_{2}-m^{2}}{n_{1}+n_{2}-2m}%
\]
then%
\begin{align*}
\left(  n_{1}-m\right)  n_{2}\left(  k-m\right)  AB_{2} &  =\left(
n_{1}+1\right)  m\left(  k-m\right)  \left(  n_{3}-k+m+1\right)  -\xi\left(
k,m\right)  \\
-\left(  n_{1}-m+1\right)  n_{2}\left(  k-m+1\right)  CB_{1} &  =-n_{1}%
m\left(  k-m+1\right)  \left(  n_{3}-k-m\right)  +\xi\left(  k,m-1\right)  .
\end{align*}
We must deal with the exceptional case $2m=n_{1}+n_{2}$: if $k<\left(
n_{1}\wedge n_{2}\right)  $ then $m\leq k$ and $2m<n_{1}+n_{2},$ so suppose
$n_{1}\wedge n_{2}\leq k$ and $m=n_{1}\wedge n_{2}$, and $2m=n_{1}+n_{2}$
implies $m=n_{1}=n_{2}$ so that the term $AB_{2}$ does not occur. In fact
there is more detail: if $k>n_{1}=n_{2}$ then $n_{1}+n_{2}-k<n_{1}$ giving the
bound $m\leq n_{1}+n_{2}-k$ and $n_{1}+n_{2}-2m\geq k-m>0$, else if $k=m$ then
$\widetilde{\psi}_{1}=1$ and $A=1$.

Adding the six terms we have shown that the coefficient $c_{m}$ in the
expansion $\rho g_{3}\psi_{m}=\sum\limits_{n}c_{n}\psi_{n}$ is%
\begin{align*}
c_{m} &  =\frac{1}{n_{1}n_{2}n_{3}}\left\{  \zeta\left(  k,m\right)
-\xi\left(  k,m\right)  +\xi\left(  k,m-1\right)  \right\}  \\
\zeta\left(  k,m\right)   &  :=m^{2}\left(  3k-2m\right)  +m\left(  n_{3}%
^{2}-k^{2}\right)  -\left(  n_{3}-k+m\right)  \left(  m\left(  n_{1}%
+n_{2}+n_{3}\right)  -n_{1}n_{2}\right)  .
\end{align*}
Thus the spherical function $\Phi^{\left[  N-k,k\right]  }\left(
g_{3}\right)  =\frac{1}{n_{1}n_{2}n_{3}}\sum_{m=m_{L}}^{m_{U}}\zeta\left(
k,m\right)  -\xi\left(  k,m_{U}\right)  +\xi\left(  k,m_{L}-1\right)  $, by
telescoping. Recall $m_{L}:=0\vee\left(  k-n_{3}\right)  $ and $m_{U}%
:=n_{1}\wedge n_{2}\wedge k\wedge\left(  n_{1}+n_{2}-k\right)  $. By
definition $\xi\left(  k,-1\right)  =0=\xi\left(  k,k-n_{3}-1\right)  $ so
that $\xi\left(  k,m_{L}-1\right)  =0$ , also $\xi\left(  k,k\right)  =0$. The
nonzero values of $\xi\left(  k,m_{U}\right)  $ are
\begin{align*}
\xi\left(  k,m_{U}\right)   &  =\left(  m_{U}+1\right)  \left(  m_{U}-\left(
k-n_{3}\right)  +1\right)  m_{U}\left(  k-m_{U}\right)  ,~m_{U}=n_{1}\wedge
n_{2},\\
\xi\left(  k,m_{U}\right)   &  =\left(  m_{U}+1\right)  \left(  m_{U}-\left(
k-n_{3}\right)  +1\right)  \left(  n_{1}n_{2}-m_{U}^{2}\right)  ,~m_{U}%
=n_{1}+n_{2}-k.
\end{align*}
One of the first two factors is $\left(  m_{U}-m_{L}+1\right)  $ since
$m_{L}=0\vee k-n_{3}$.$.$We point out that the value of a spherical function
times $n_{1}n_{2}n_{3}$ is an integer, because the character table of
$\mathcal{S}_{N}$ has all integer entries, and necessarily the denominator in
$\xi\left(  k,\mu\right)  $ cancels out. If $k\leq n_{3}$ ($m_{L}=0$) let
$\mu=m_{U}$ and then%
\[
\Phi^{\left[  N-k,k\right]  }\left(  g_{3}\right)  =\frac{\mu+1}{n_{1}%
n_{2}n_{3}}\left\{
\begin{array}
[c]{c}%
n_{1}n_{2}n_{3}-\frac{1}{2}\mu\left(  n_{1}n_{2}+n_{1}n_{3}+n_{1}n_{3}\right)
+\frac{1}{6}N\mu\left(  \mu-1\right)  \\
+\frac{1}{2}\left(  k-\mu\right)  \left(  \mu\left(  N-k+\mu+1\right)
-2n_{1}n_{2}\right)  -\frac{1}{\mu+1}\xi\left(  k,\mu\right)
\end{array}
\right\}  .
\]
The simplest case is $\mu=k,$ $\xi\left(  k,k\right)  =0$. The sum can be
explicitly found in general but tends to be complicated. Here is one way to
display the sum (with $\mu=m_{U}-m_{L}$, $\nu=m_{L}$, $\delta=k-m_{U}$),
omitting the factor $\dfrac{\mu+1}{n_{1}n_{2}n_{3}}$
\begin{align}
&  \frac{1}{6}N\mu\left(  \mu-1\right)  +\frac{1}{2}N\left(  \mu\nu+\mu
\delta+2\nu\delta\right)  -\frac{1}{2}\left(  \mu+2\nu\right)  \left(
n_{1}n_{2}+n_{1}n_{3}+n_{1}n_{3}\right)  +n_{1}n_{2}n_{3}\label{sumPhi}\\
&  +\frac{1}{2}\mu\nu\left(  \nu-1\right)  +\left(  \nu-\delta\right)  \left(
n_{1}n_{2}+\nu\delta\right)  -\frac{1}{2}\mu\delta\left(  \delta-1\right)
-\frac{1}{\mu+1}\xi\left(  k,m_{U}\right)  .\nonumber
\end{align}

\subsection{\label{mult1}Special situations}

Multiplicity equal to one arises when $m_{L}=m_{U}$ at (1) $k=n_{1}+n_{3}$,
(2) $k=n_{2}+n_{3}$, (3) $k=n_{1}+n_{2}$, (4) $k=\frac{N}{2}$. For case (1)
$m_{L}=k-n_{3}=m_{U}=n_{1}$, then $\mu=0,$ $\nu=n_{1}$, $\delta=n_{3}$ and by
formula (\ref{sumPhi}) $\Phi^{\left[  N-k,k\right]  }\left(  g_{3}\right)
=-\dfrac{1}{n_{2}}$. A similar calculation shows case (2) yields $-\dfrac
{1}{n_{1}}$. For case (3) $m_{L}=0=n_{1}+n_{2}-k=m_{U}$, then $\mu
=0,~\nu=0,~\delta=n_{1}+n_{2}$ and by the same formula $\Phi^{\left[
N-k,k\right]  }\left(  g_{3}\right)  =-\dfrac{1}{n_{3}}$. In these cases there
are implicit bounds such as $n_{2}\geq\frac{N}{2},n_{1}+n_{3}\leq\frac{N}{2}$,
following from $2k\leq N$. Applying these parameters for the $2$-cycle case
when $g_{2}=\left(  x_{1}^{\left(  2\right)  },x_{1}^{\left(  1\right)
}\right)  $ and using formula (\ref{cycle2}) for $\Phi^{\left[  N-k,k\right]
}\left(  g_{2}\right)  $ one obtains (1) $-\dfrac{1}{n_{2}}$, (2) $-\dfrac
{1}{n_{1}}$, (3) $1$.

For case (4) (when $N$ is even) $m_{L}=\frac{N}{2}-n_{3}=m_{U}=n_{1}%
+n_{2}-\frac{N}{2}$, $\delta=n_{3}$. The resulting values can be written as%
\begin{align*}
\Phi^{\left[  N/2,N/2\right]  }\left(  g_{3}\right)    & =\frac{1}{n_{1}%
n_{2}n_{3}}\left\{  -\prod\limits_{i=1}^{3}\left(  \frac{N}{2}-n_{i}\right)
-\sum_{1\leq i<j\leq3}\left(  \frac{N}{2}-n_{i}\right)  \left(  \frac{N}%
{2}-n_{j}\right)  \right\}  ,\\
\Phi^{\left[  N/2,N/2\right]  }\left(  g_{2}\right)    & =\dfrac{1}{n_{1}%
n_{2}}\left(  \left(  \frac{N}{2}-n_{1}\right)  \left(  \frac{N}{2}%
-n_{2}\right)  -\frac{N}{2}+n_{3}\right)  .
\end{align*}

Another example is $n_{1}=n_{2}=n_{3}=n$ (and $N=3n$). If $k\leq n$ then
$\nu=m_{L}=0,m_{U}=k$, $\mu=k$, $\delta=0$ and%
\[
\Phi^{\left[  N-k,k\right]  }\left(  g_{3}\right)  =\frac{k+1}{n^{2}}\left(
n^{2}-\frac{3}{2}nk+\frac{1}{2}k\left(  k-1\right)  \right)  .
\]
If $n\leq k\leq\frac{3}{2}n$ then $\nu=m_{L}=k-n,m_{U}=2n-k$ (since
$n\geq2n-k$), $\mu=3n-2k$, $\delta=2k-2n$ and
\[
\Phi^{\left[  N-k,k\right]  }\left(  g_{3}\right)  =\frac{3n-2k+1}{n^{2}%
}\left(  n^{2}-\frac{3}{2}nk+\frac{1}{2}k\left(  k-1\right)  \right)  .
\]


\begin{thebibliography}{9}                                                                                                %


\bibitem {DPJ}C. F. Dunkl, A difference equation and Hahn polynomials in two
variables, \textit{Pacific J. Math}. \textbf{92} (1981), 57-71.

\bibitem {Dhk}C. F. Dunkl, Some spherical function values for hook tableaux
isotypes and Young subgroups, arXiv:2503.04547, 6 Mar 2025.

\bibitem {DG}C. F. Dunkl and V. Gorin, Eigenvalues of Heckman-Polychronakos
operators, arXiv:2412:01938v1, 2 Dec 2024

\bibitem {He}G. Heckman, An elementary approach to the hypergeometric shift
operators of Opdam\emph{, Invent. math. }\textbf{103} (1991), 341-350.

\bibitem {MacD}I. G. Macdonald, \emph{Symmetric Functions and Hall
Polynomials}, 2nd ed., Clarendon Press, Oxford 1995.

\bibitem {Po}A. P. Polychronakos, Exchange operator formalism for integrable
systems of particles, \emph{Physical Review Letters,} \textbf{69}(5) (1992), 703.

\bibitem {Sc}F. Scarabotti, Harmonic analysis of the space of $S_{a}\times
S_{b}\times S_{c}$-invariant vectors in the irreducible representations of the
symmetric group\textit{, Adv. in Appl. Math.} \textbf{35} (2005), no. 1, 71-96.
\end{thebibliography}
\end{document}